\theoremstyle{plain}
\newtheorem{theorem}{Theorem}[section]
\newtheorem{corollary}[theorem]{Corollary}
\newtheorem{proposition}[theorem]{Proposition}
\theoremstyle{definition}
\newtheorem{remark}[theorem]{Remark}
\newtheorem{remarks}[theorem]{Remarks}
\numberwithin{theorem}{section}
\numberwithin{equation}{section}
\renewcommand\d{\mathrm{d}}
\newcommand\dd{\,\d}
\newcommand{\ddov}[1]{\, \frac{\d{#1}}{{#1}}}
\newcommand{\N}{\mathbb{N}}
\newcommand{\R}{\mathbb{R}}
\newcommand{\rn}{\R^n}
\newcommand{\M}{\mathscr{M}}
\DeclareMathOperator{\spt}{supp}
\title{Maximal noncompactness of limiting Sobolev embeddings}
\author{Jan Lang, Zden\v ek Mihula and Lubo\v s Pick}
\address{Jan Lang, Department of Mathematics, The Ohio State University, 231 West 18th Avenue, Columbus, OH 43210-1174; Czech Technical University in Prague, Faculty of Electrical Engineering, Department of Mathematics, Technick\'a~2, 166~27 Praha~6, Czech Republic}
\email{lang.162@osu.edu}
\urladdr{\href{https://orcid.org/0000-0003-1582-7273}{0000-0003-1582-7273}}
\address{Zden\v ek Mihula, Czech Technical University in Prague, Faculty of Electrical Engineering, Department of Mathematics, Technick\'a~2, 166~27 Praha~6, Czech Republic}
\email{mihulzde@fel.cvut.cz}
\urladdr{\href{https://orcid.org/0000-0001-6962-7635}{0000-0001-6962-7635}}
\address{Lubo\v{s} Pick, Department of Mathematical Analysis, Faculty of Mathematics and Physics, Charles University, Sokolovsk\'a~83, 186~75 Praha~8, Czech Republic}
\email{pick@karlin.mff.cuni.cz}
\urladdr{\href{https://orcid.org/0000-0002-3584-1454}{0000-0002-3584-1454}}
\begin{document}

\keywords{measure of noncompactness, maximal noncompactness, Sobolev spaces, limiting Sobolev embeddings}

\subjclass[2020]{47B06, 47H08, 46E30, 46E35}

\thanks{This research was supported in part by grant No.~23-04720S of the Czech Science Foundation.}

\begin{abstract}
    We develop a new method suitable for establishing lower bounds on the ball measure of noncompactness of operators acting between considerably general quasinormed function spaces. This new method removes some of the restrictions oft-presented in the previous work. Most notably, the target function space need not be disjointly superadditive nor equipped with a norm. Instead, a property that is far more often at our disposal is exploited\textemdash namely the absolute continuity of the target quasinorm.
    
		We use this new method to prove that limiting Sobolev embeddings into spaces of Brezis--Wainger type are so-called maximally noncompact, i.e., their ball measure of noncompactness is the worst possible.
\end{abstract}

\maketitle

\section{Introduction}

Maximal noncompactness is a useful modern function-analytic tool whose primary purpose is to provide refined information about noncompact mappings. Although compactness is a topological property, the definition of the maximal noncompactness is purely geometric. For example, it is not invariant with respect to topologically equivalent (quasi)norms. The concept is intimately connected with that of the ball measure of noncompactness, whose earliest occurrences reach as far as the 1930s\textemdash notably thanks to the advances by Kuratovskii (see~\cite{Kur:30})\textemdash and which has been applied to various problems in analysis and its applications (see~\cite{BG:80,Dar:55,EE:18,Sad:68}). It is one of the successful tools for compensating for the lack of compactness; another that has been of interest is for example the concentrated compactness principle (see~e.g.~\cite{Bre:83,Lio1:85,Lio2:85,Str:84}).

It is quite easy to realize that the ball measure of noncompactness, $\beta(T)$, of an operator $T$ acting between (quasi)normed linear spaces $X$ and $Y$, defined as the infimum of radii $\varrho>0$ for which $T(B_X)$ can be covered by finitely many balls in $Y$ with radius $\varrho$, cannot exceed the operator norm, $\|T\|$, of $T$. At the opposite end, the lowest value it can attain is zero, and it is an easy exercise to see that $\beta(T)=0$ is a necessary and sufficient condition for $T$ to be compact. Hence, $\beta(T)=\|T\|$ indicates, from the point of view of finite coverings, something like `the worst possible degree of noncompactness' of the operator $T$.

The notion of the maximal noncompactness itself (i.e., $\beta(T)=\|T\|$) is, however, far more recent. The notion was first used in~\cite{Lan:21}, and it surfaced in connection with sharp (nonlimiting/subcritical) Sobolev embeddings, which are typically noncompact (e.g, \cite{Cav:19,Cav:22,Ker:08,Sla:15}), and for which the additional information about how bad their noncompactness is, can be of use. Results on the maximal noncompactness of various operators had been known before, but they had not been called that way (cf.~\cite{EE:18} or~\cite{Hen:03}).

Noncompact Sobolev embeddings are often maximally noncompact. The classical nonlimiting embedding of a first-order homogeneous Sobolev space into the smallest possible Lebesgue space, namely
\begin{equation}\label{E:sobolev-lebesgue-nonlimiting}
    V^{1,p}_0(\Omega)\to L^{\frac{np}{n-p}}(\Omega),
\end{equation}
where $n\in\N$, $n\ge 2$, $1\le p<n$, and $\Omega\subseteq\mathbb R^n$ is a bounded domain (cf.~e.g.~\cite{Ada:75,Maz:11}), was shown to be maximally noncompact in~\cite{Hen:03}. The precise definition of first-order homogeneous Sobolev spaces is given in Section~\ref{sec:prel}. When one ventures out of the realm of Lebesgue spaces, enhancements of the embedding~\eqref{E:sobolev-lebesgue-nonlimiting} are available. Most notably, with the help of two-parameter Lorentz spaces, one has  
\begin{equation}\label{E:sobolev-lorentz-nonlimiting}
    V^{1,p}_0(\Omega)\to L^{\frac{np}{n-p},p}(\Omega),
\end{equation}
under the same restrictions on the parameters as in~\eqref{E:sobolev-lebesgue-nonlimiting}.

Now, while it is easy to verify that~\eqref{E:sobolev-lorentz-nonlimiting} is sharper than~\eqref{E:sobolev-lebesgue-nonlimiting}, owing to the (proper) inclusion $L^{\frac{np}{n-p},p}(\Omega)\subsetneq L^{\frac{np}{n-p}}(\Omega)$, there is no obvious way of exploiting the information that one embedding is maximally noncompact for proving that so is the other. In fact, this seeming discrepancy is universal, not limited only to these two particular embeddings. So, the maximal noncompactness of~\eqref{E:sobolev-lorentz-nonlimiting} needed separate treatment, and indeed received it in~\cite{Bou:19}. In fact, it was shown there that any of the Sobolev embeddings of the form
\begin{equation}\label{E:sobolev-lorentz-lorentz}
    V^{1}_0L^{p,q}(\Omega)\to L^{\frac{np}{n-p},r}(\Omega),
\end{equation}
where $1\leq q \leq r < \infty$, is still maximally noncompact. Here $V^{1}_0L^{p,q}(\Omega)$ is a first-order homogeneous Sobolev space built upon the Lorentz space $L^{p,q}(\Omega)$ (see Section~\ref{sec:prel} for the precise definition). This result left open the case when $r=\infty$, where the method used in both \cite{Bou:19} and~\cite{Hen:03} fails. The method applied there was based on a combination of the disjoint superadditivity of the target space and a certain shrinking property of the embedding. Recall that a functional $\|\cdot\|_Y$ is said to be $\gamma$-\emph{disjointly superadditive} for some $\gamma\in(0, \infty)$ if there is a constant $C > 0$ such that, for every $k\in\N$ and functions $\{f_j\}_{j=1}^{k}\subseteq Y$ having pairwise disjoint supports, one has
    \begin{equation*}
        \sum_{j=1}^{k}\|f_j\|_{Y}^{\gamma} \le C\left\|\sum_{j=1}^{k}f_j\right\|_{Y}^{\gamma}.
    \end{equation*}
    In the theory of Banach spaces/lattices, this property is often called a lower $\gamma$-estimate, see~\cite{LT:79} for example. Remarkably, it was proved later in \cite{Lan:21} by completely different methods that the embedding 
\begin{equation}\label{E:sobolev-weak-lebesgue}
    V^{1}_0 L^{p,q}(\Omega)\to L^{\frac{np}{n-p},\infty}(\Omega),
\end{equation}
where $q\in[1,\infty]$, retains its maximal noncompactness despite the fact that the weak Lebesgue space $L^{\frac{np}{n-p},\infty}(\Omega)$ is not disjointly superadditive.

The \emph{limiting} Sobolev embedding (corresponding to taking $p=n$ in the domain space) is much more difficult (and therefore much more interesting) to handle. Arguably the most notorious version of the limiting Sobolev embedding uses an exponential-type Orlicz space for its target, and reads as
\begin{equation}\label{E:trudinger}
    V^{1,n}_0(\Omega)\to \exp L^{\frac{n}{n-1}}(\Omega),
\end{equation}
see~\cite{Poh:65,Str:71,Tru:67,Yud:61}. This embedding is known to be sharp as far as Orlicz target spaces are concerned, and, rather unsurprisingly, also to be noncompact (see~\cite{Hem:70}). Results on the maximal noncompactness of~\eqref{E:trudinger} are available, too. It was shown in~\cite{Hen:03}, using extremal properties of certain radially decreasing functions from~\cite{Mos:70}, that~\eqref{E:trudinger} is maximally noncompact as long as the measure of $\Omega$ is small enough. 

A Lorentz-like refinement of~\eqref{E:trudinger} is possible, too, but the two-parameter scale of Lorentz spaces is not sufficient for it. The result can be stated for example in the form
\begin{equation}\label{E:sobolev-lorentz-zygmund}
    V^{1,n}_0(\Omega)\to L^{\infty,q,\frac{1}{n}-\frac{1}{q}-1}(\Omega),
\end{equation}
where $q\in[n,\infty)$. The target space $L^{\infty,q,\frac{1}{n}-\frac{1}{q}-1}(\Omega)$ is an instance of the so-called Lorentz--Zyg\-mund spaces. Lorentz--Zygmund spaces were introduced in~\cite{Ben:80} and further treated, e.g., in~\cite{OP:99}. For various statements, proofs, and further details concerning the embedding~\eqref{E:sobolev-lorentz-zygmund}, see e.g.~\cite{Bre:80,Bru:79,Cwi:98,Han:79,Mal:02,Maz:11}. The embedding \eqref{E:sobolev-lorentz-zygmund} indeed improves \eqref{E:trudinger} because $L^{\infty,q_1,\frac{1}{n}-\frac{1}{q_1}-1}(\Omega)\subsetneq L^{\infty,q_2,\frac{1}{n}-\frac{1}{q_2}-1}(\Omega)\subsetneq \exp L^{\frac{n}{n-1}}(\Omega)$ for every $n\leq q_1 < q_2 < \infty$. The weak variant of \eqref{E:sobolev-lorentz-zygmund}, corresponding to $q = \infty$, reads as
\begin{equation}\label{E:sobolev-lorentz-zygmund_weak}
    V^{1,n}_0(\Omega)\to L^{\infty,\infty,\frac{1}{n}-1}(\Omega).
\end{equation}

A common feature of the target spaces in~\eqref{E:sobolev-weak-lebesgue} and~\eqref{E:sobolev-lorentz-zygmund_weak} is that both are examples, albeit of quite a different nature, of the so-called Marcinkiewicz spaces, which play an important role for example in interpolation theory, or in the theory of rearrangement-invariant function spaces. Although the defining functionals for the spaces $\exp L^{\frac{n}{n-1}}(\Omega)$ and $L^{\infty,\infty,\frac{1}{n}-1}(\Omega)$ are equivalent, their mere equivalence is not enough for deriving the maximal noncompactness of \eqref{E:sobolev-lorentz-zygmund_weak} from that of \eqref{E:trudinger}.

The question of whether the refined limiting embeddings \eqref{E:sobolev-lorentz-zygmund} as well as their weak variant \eqref{E:sobolev-lorentz-zygmund_weak} are maximally noncompact has been open. We fill this gap in this paper. This question is interesting for several reasons. Notably, the target space in \eqref{E:sobolev-lorentz-zygmund} is neither a Marcinkiewicz space nor it is disjointly superadditive (the latter observation, although it is most likely not available in an explicit form, is hidden in~\cite{KM:04}, as we shall point out), and so the techniques that successfully worked in earlier approaches cannot be used here. As for \eqref{E:sobolev-lorentz-zygmund_weak}, although the target space is a Marcinkiewicz space as in \cite{Lan:22}, the method used there is not suitable for proving the maximal noncompactness of \eqref{E:sobolev-lorentz-zygmund_weak}. The reason is that the logarithmic function defining the Marcinkiewicz space $L^{\infty,\infty,\frac{1}{n}-1}(\Omega)$ grows too slowly (or rather it is too slowly varying). Nevertheless, the ball measure of noncompactness of embeddings into Marcinkiewicz spaces and their maximal noncompactness was treated in~\cite{MMMP:preprint}. Among other things, it was proved there that Marcinkiewicz spaces are more or less never disjointly superadditive (with a single notable exception when the space collapses to $L^1(\Omega)$, which is a theoretical possibility).

We would like to point out that there are some other technical difficulties that one has to overcome when dealing with \eqref{E:sobolev-lorentz-zygmund} and \eqref{E:sobolev-lorentz-zygmund_weak}, which are perhaps not entirely obvious. Notably, the functionals governing the target spaces are, strictly speaking, not norms, but merely quasinorms. Given the geometric nature of the maximal noncompactness, this unsurprisingly causes trouble. Although the functionals are equivalent to norms, this equivalence is not enough when dealing with properties of such a geometric nature such as the maximal noncompactness of a mapping. 

In this paper, we adopt a lateral point of view. Departing from the observation that Lorentz--Zygmund spaces are special instances of the classical Lorentz spaces of type Lambda, we first study the rather general question of maximal noncompactness of operators whose target space is one of these spaces. Our universal results then enable us to solve the open problem in the affirmative, more precisely, we will show that the embedding~\eqref{E:sobolev-lorentz-zygmund} is maximally noncompact, despite the target space not being disjointly superadditive nor equipped with a norm. Lacking disjoint superadditivity and not having the Marcinkiewicz structure at hand, we make use of other properties possessed by this structure, the key one being the absolute continuity of the target quasinorm. We will also show the maximal noncompactness of the embedding \eqref{E:sobolev-lorentz-zygmund_weak}. To this end, we exploit the new technique developed in \cite{MMMP:preprint}.

Our approach is based on a carefully tailored minimal axiomatization of the properties of functionals that are needed in order to obtain, as the ultimate goal, a suitable lower bound for the measure of noncompactness. This allows us to treat the problem in quite a general setting and to circumvent some of the obstacles. Let us note that, in particular, we do not assume the Fatou property, and we allow for quasinorms. This broadens the field of applications significantly. 

The axiomatization process leads us to introducing quasi-K\"othe function spaces. We further introduce a new geometric property of functionals, which we call uniform separation. It is worth noticing that while all $\alpha$-norms have it, not all quasinorms do, and, of course, the Aoki--Rolewicz theorem does not help for the same reasons as those stated above.


\section{Preliminaries}\label{sec:prel}
Throughout the paper, we assume that $(R,\mu)$ is a nonatomic measure space. We denote by $\M(R,\mu)$ the set of all $\mu$-measurable functions on $R$, and by $\M_0(R,\mu)$ those functions from $\M(R,\mu)$ that are finite $\mu$-a.e.

Let $Y$ be a quasi-Banach space contained in $\M_0(R,\mu)$ with a quasinorm $\|\cdot\|_Y$.

We say that the quasinorm of $Y$ is \emph{absolutely continuous} if every $f\in Y$ has absolutely continuous quasinorm\textemdash that is,
\begin{equation*}
\lim_{n\to \infty} \|f\chi_{E_n}\|_{Y} = 0
\end{equation*}
for every sequence $\{E_n\}_{n=1}^\infty\subseteq R$ of $\mu$-measurable sets such that $E_n\to\emptyset$ as $n\to\infty$.

We say that $Y$ is a \emph{quasi-Banach lattice} if for every $f\in \M_0(R,\mu)$ and $g\in Y$ such that $|f|\leq |g|$ $\mu$-a.e.~in $R$, we have $f\in Y$ and $\|f\|_Y \leq \|g\|_Y$.

We say that a quasi-Banach lattice $Y$ is a \emph{quasi-K\"othe function space} (cf.~\cite{CNS:03}) if for every $f\in Y$ and $E\subseteq R$ with $\mu(E) < \infty$, we have $\chi_E \in Y$ and $f\chi_E \in L^1(R, \mu)$.

The \emph{distributional function} of a function $f\in\M(R,\mu)$ is the function $f_*\colon(0, \infty) \to [0, \infty]$ defined as
\begin{equation*}
f_*(\lambda) = \mu(\{x\in R: |f(x)| > \lambda\}),\ \lambda\in(0, \infty).
\end{equation*}
The \emph{nonincreasing rearrangement} of $f\in\M(R,\mu)$ is the function $f^*\colon(0, \infty) \to [0, \infty]$ defined as
\begin{equation*}
f^*(t) = \inf\{\lambda > 0: f_*(\lambda)\leq t\},\ t\in(0, \infty).
\end{equation*}
The nonincreasing rearrangement satisfies
\begin{equation}\label{prel:star_of_sum_pointwise_est}
(f + g)^*(s + t) \leq f^*(s) + g^*(t) \quad \text{for every $s,t\in(0, \infty)$}
\end{equation}
and every $f,g\in\M_0(R,\mu)$.

Let $w\in\M_0(0, \mu(R))$ be a.e.~positive and $q\in(0, \infty)$. The \emph{Lambda space} $\Lambda^q_w(R,\mu)$ is defined as
\begin{equation*}
\Lambda^q_w(R,\mu) = \{f\in \M(R,\mu): \|f\|_{\Lambda^q_w(R,\mu)} < \infty\},
\end{equation*}
where
\begin{equation*}
\|f\|_{\Lambda^q_w(R,\mu)} = \left( \int_0^{\mu(R)} f^*(t)^q w(t) \dd t\right)^\frac1{q},\ f\in \M(R, \mu).
\end{equation*}
The functional $\|f\|_{\Lambda^q_w(R,\mu)}$ can be expressed as
\begin{equation}\label{prel:lambda_by_means_of_distributional_func}
\|f\|_{\Lambda^q_w(R,\mu)} = \left(q \int_0^\infty W(f_*(t)) t^{q - 1} \dd t\right)^\frac1{q}.
\end{equation}
Note that we have $\|\cdot\|_{\Lambda^q_w(R,\mu)} = \|\cdot\|_{L^q(R,\mu)}$ for $w\equiv 1$ (e.g., \cite{LL:01}).

From now on, we assume that
\begin{equation}\label{prel:Lambda_space_nontrivial}
0 < W(t) = \int_0^t w(s) \dd s < \infty \quad \text {for every $t\in(0,\mu(R))$},
\end{equation}
for otherwise $\Lambda^q_w(R,\mu)$ contains only the zero function.

Despite the terminology, $\Lambda^q_w(R,\mu)$ need not be a linear set (see \cite{CKMP:04}). The Lambda space $\Lambda^q_w(R,\mu)$ is a quasi-Banach space if and only if (see \cite{CS:93})
\begin{equation}\label{prel:Lambda_space_deltaTwo}
\sup_{t\in(0, \mu(R)/2)}\frac{W(2t)}{W(t)} < \infty.
\end{equation}
It is easy to see (owing to the dominated convergence theorem) that $\Lambda^q_w(R,\mu)$ is a quasi-Banach lattice whose quasinorm is absolutely continuous if (and only if) $\Lambda^q_w(R,\mu)$ is a quasi-Banach space. If, in addition,
\begin{equation}\label{prel:Lambda_space_quasiKothe}
\begin{cases}
\sup_{t\in(0, a)} \frac{t^q}{W(t)}  < \infty \quad &\text{if $q \in(0, 1]$}, \\
\int_0^{a} \big( \frac{t}{W(t)} \big)^\frac1{q-1} \dd t < \infty \quad &\text{if $q\in(1, \infty)$},
\end{cases}
\end{equation}
for every $a\in(0, \mu(R))$, then $\Lambda^q_w(R,\mu)$ is a quasi-K\"othe function space (see~\cite{S:93}).

Let $\mu(R) < \infty$ and $w_{p,q,\alpha}$ be defined as
\begin{equation}\label{prel:LZ_weight_def}
w_{p,q,\alpha}(t) = t^{\frac1{p} - \frac1{q}}\log\Big( \frac{2\mu(R)}{t} \Big)^{\alpha},\ t\in(0, \mu(R)),
\end{equation}
for $p,q\in(0, \infty]$ and $\alpha\in\R$. The \emph{Lorentz--Zygmund spaces} $L^{p,q,\alpha}(R,\mu)$ are defined as
\begin{equation*}
L^{p,q,\alpha}(R,\mu) = \{f\in\M(R, \mu): \|f\|_{L^{p,q,\alpha}(R,\mu)} = \|f^*w_{p,q,\alpha}\|_{L^q(0, \mu(R))} < \infty\}.
\end{equation*}
Note that $L^{p,q,\alpha}(R,\mu) = \Lambda^q_{w_{p,q,\alpha}^q}(R,\mu)$ for $q\in(0, \infty)$. Furthermore, note that $\|\cdot\|_{L^{p,p,0}(R, \mu)} = \|\cdot\|_{L^p(R, \mu)}$ for every $p\in(0, \infty]$. The spaces $L^{p,q}(R, \mu) = L^{p,q, 0}(R, \mu)$ are often called (two-parameter) \emph{Lorentz spaces}. 

Straightforward computations show that the Lorentz--Zygmund space $L^{p,q,\alpha}(R,\mu)$ is a quasi-K\"othe function space provided that one of the following conditions is satisfied:
\begin{equation}\label{prel:LZ_quasi_Kothe_cond}
\begin{cases}
&\text{$p = 1$, $q \in(0, 1]$, and $\alpha \geq 0$};\\
&\text{$p = 1$, $q \in (1, \infty]$, and $\alpha + \frac1{q} > 1$};\\
&\text{$p\in(1, \infty)$ and $q \in(0, \infty]$};\\
&\text{$p = \infty$, $q \in(0, \infty)$, and $\alpha + \frac1{q} < 0$};\\
&\text{$p = q = \infty$ and $\alpha \leq 0$}.
\end{cases}
\end{equation}
Furthermore, if $p=q=1$ and $\alpha\geq 0$ or $p,q\in(1, \infty)$ or $p = \infty$, $q\in[1, \infty)$, and $\alpha + \frac1{q} < 0$, then $\|\cdot\|_{L^{p,q,\alpha}}$ is equivalent to a norm and the Lorentz--Zygmund space $L^{p,q,\alpha}(R,\mu)$ is equivalent to a rearrangement-invariant Banach function space in the sense of \cite{BS_book:88} (see~\cite{OP:99}).

Given a domain $\Omega\subseteq\rn$ of finite measure and $p\in[1,\infty)$, the first-order homogeneous Sobolev space $V^{1,p}_0(\Omega)$ is the Banach space of all weakly differentiable functions $u$ on $\Omega$, endowed with the norm $\|u\|_{V^{1,p}_0(\Omega)} = \|\,|\nabla u|\,\|_{L^p(\Omega)}$, whose continuation by~$0$ outside~$\Omega$ is weakly differentiable and whose gradient belongs to $L^p(\Omega)$. The first-order homogeneous Sobolev space $V^{1}_0L^{p,q}(\Omega)$ built upon a Lorentz space $L^{p,q}(\Omega)$ is defined in the same way but with $L^p(\Omega)$ replaced by $L^{p,q}(\Omega)$.


\section{General theorem on maximal noncompactness}
We say that a quasinorm $\|\cdot\|_Y$ on a quasi-Banach space $Y$ is \emph{uniformly separating} if
for every $0 < r < R$ there is $\varepsilon_{r,R} > 0$ such that
\begin{equation}\label{thm:maximal_noncomp_ac_target:property_of_Y}
		\|f + g\|_Y \geq \varepsilon_{r,R}
\end{equation}
for every $f,g\in Y$ satisfying $\|f\|_Y \geq R > r \geq \|g\|_Y$.

Note that $\|\cdot\|_Y$ is uniformly separating if it is an $\alpha$-norm for some $\alpha\in(0, 1]$. We say that $\|\cdot\|_{Y}$ is an~\emph{$\alpha$-norm} if
\begin{equation*}
    \|f+g\|_Y^{\alpha}\le \|f\|_Y^{\alpha}+\|g\|_Y^{\alpha}
    \quad\text{for every $f,g\in Y$.}
\end{equation*}
Obviously, a $1$-norm is a norm. When $\|\cdot\|_Y$ is an $\alpha$-norm, then
\begin{equation*}
\|f + g\|_Y \geq \big( \|f\|_Y^\alpha - \|g\|_Y^\alpha \big)^{\frac1{\alpha}} \geq \varepsilon_{r, R} = \big( R^\alpha - r^\alpha \big)^\frac1{\alpha} > 0
\end{equation*}
for every $f,g\in Y$ such that $\|f\|_Y \geq R > r \geq \|g\|_Y$. A pivotal example of an $\alpha$-norm that is not a norm is the $L^p$-quasinorm for $p\in(0,1)$. A less obvious one is the Lorentz $L^{1,q}$-quasinorm for $q\in(0,1)$, which is a $q$-norm. Although not every quasinorm is an $\alpha$-norm for some $\alpha$, every quasinorm is equivalent to some $\alpha$-norm by virtue of the Aoki--Rolewicz theorem (e.g., \cite[Proposition~H.2]{BL:00}). However, even though every quasinorm is equivalent to an $\alpha$-norm, not every quasinorm is uniformly separating. To this end, let $Y = \R^2$ and
\begin{equation*}
\|(x,y)\|_Y = \begin{cases}
 2|x| \quad &\text{if $y = 0$}, \\
|x| + |y| \quad &\text{if $y\neq 0$}.
\end{cases}
\end{equation*}
Now, set $r = 3/2$ and $R = 2$. Let $\delta \in (0, 1/2)$, and consider $f = (1, 0)$ and $g = (-1, \delta)$. Then $\|f\|_Y = R > r \geq \| g \|_Y$, but $\|f + g\|_Y = \|(0, \delta)\|_Y = \delta$.

Having defined what uniformly separating quasinorms are, we are in a position to state and prove our main general theorem suitable for establishing lower bounds on the ball measure of noncompactness. Recall that the \emph{ball measure of noncompactness} of a bounded positively homogeneous operator $T\colon X \to Y$, where $X$ and $Y$ are quasi-Banach spaces, is defined as
\begin{align*}
    \beta(T\colon X \to Y) = \inf\Big\{ \varrho > 0:&\ \text{there are $m\in\N$ and $\{y_j\}_{j = 1}^m\subseteq Y$ such that} \\
    &\ T(B_X) \subseteq \bigcup_{j = 1}^m (y_j + \varrho B_X) \Big\}.
\end{align*}
Here (and below), $B_X$ is the closed unit ball of $X$. An operator  $T\colon X \to Y$ is said to be  \emph{positively homogeneous} if $\|T(\alpha x)\|_Y = \alpha \|Tx\|_Y$ for every $x\in X$ and every scalar $\alpha > 0$.
\begin{theorem}\label{thm:maximal_noncomp_ac_target}
Let $T$ be a bounded positively homogeneous operator from a quasi-Banach space $X$ to a quasi-K\"othe function space $Y\subseteq \M_0(R,\mu)$, where $(R,\mu)$ is a $\sigma$-finite measure space. Assume that $Y$ has absolutely continuous and uniformly separating quasinorm.

Let $\lambda > 0$. If there is a sequence $\{x_j\}_{j = 1}^\infty \subseteq B_X$ such that
\begin{equation}\label{thm:maximal_noncomp_ac_target:sequence_spt}
\spt Tx_j \rightarrow \emptyset \quad \text{as $j\to\infty$}
\end{equation}
and
\begin{equation}\label{thm:maximal_noncomp_ac_target:sequence_qnorm}
\|Tx_j\|_Y \geq \lambda \quad \text{for every $j\in\N$},
\end{equation}
then $\beta(T\colon X \to Y)\geq \lambda$.

In particular, if such a sequence $\{x_j\}_{j = 1}^\infty$ exists for every $\lambda\in(0, \|T\|_{X\to Y})$, then the operator $T$ is maximally noncompact.
\end{theorem}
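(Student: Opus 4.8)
The plan is to argue by contradiction and show that $T(B_X)$ cannot be covered by finitely many balls of any radius strictly less than $\lambda$, which is exactly the statement $\beta(T\colon X\to Y)\ge\lambda$. So I would assume $\beta(T\colon X\to Y)<\lambda$ and fix some $\varrho\in(0,\lambda)$ together with finitely many centers $y_1,\dots,y_m\in Y$ such that $T(B_X)\subseteq\bigcup_{i=1}^m(y_i+\varrho B_Y)$, where $B_Y$ is the closed unit ball of $Y$. Since $\{x_j\}\subseteq B_X$, every $Tx_j$ lies in one of these $m$ balls, so by the pigeonhole principle at least one center, call it $y:=y_{i_0}$, captures infinitely many of the $Tx_j$. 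After passing to this subsequence and relabelling, I would then have $\|Tx_j-y\|_Y\le\varrho$ for every $j$.

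The heart of the scheme is a decomposition of $Tx_j$ that isolates, on the shrinking supports $E_j:=\spt Tx_j$, the contribution of the fixed center $y$. Setting $h_j:=(Tx_j-y)\chi_{E_j}$ and using that $Tx_j\chi_{E_j}=Tx_j$, I would record the elementary identity $Tx_j-h_j=y\chi_{E_j}$. Because $Y$ is a quasi-Banach lattice and $|h_j|\le|Tx_j-y|$ holds pointwise, the lattice property gives $h_j\in Y$ together with $\|h_j\|_Y\le\|Tx_j-y\|_Y\le\varrho$. Thus the two functions $Tx_j$ and $-h_j$ belong to $Y$ and have quasinorms straddling the gap between $\varrho$ and $\lambda$, that is, $\|Tx_j\|_Y\ge\lambda>\varrho\ge\|h_j\|_Y$, while their sum is the single function $y\chi_{E_j}$.

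At this point I would call upon the hypothesis that $\|\cdot\|_Y$ is uniformly separating. Applying its defining inequality with $R=\lambda$ and $r=\varrho$ to the pair $(Tx_j,-h_j)$ yields a constant $\varepsilon_{\varrho,\lambda}>0$, independent of $j$, for which $\|y\chi_{E_j}\|_Y=\|Tx_j-h_j\|_Y\ge\varepsilon_{\varrho,\lambda}$ for all $j$. On the other hand, $y$ is a fixed element of $Y$ and its quasinorm is absolutely continuous, so from $E_j\to\emptyset$ I would conclude $\|y\chi_{E_j}\|_Y\to 0$ as $j\to\infty$. For $j$ large enough these two conclusions clash, and this contradiction establishes $\beta(T\colon X\to Y)\ge\lambda$. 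The final assertion then follows at once: if such a sequence exists for every $\lambda\in(0,\|T\|_{X\to Y})$, then $\beta(T\colon X\to Y)\ge\|T\|_{X\to Y}$, and, since $\beta(T\colon X\to Y)\le\|T\|_{X\to Y}$ always holds, $T$ is maximally noncompact.

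I expect the delicate point to be the penultimate paragraph, and it is precisely where the generality of the theorem (no norm, no disjoint superadditivity, no Fatou property) is paid for. One cannot simply feed the smallness of $\|y\chi_{E_j}\|_Y$ through a quasitriangle inequality to contradict $\|Tx_j\|_Y\ge\lambda$, since the quasinorm constant would erode the estimate. Uniform separation is designed exactly to bridge this gap in a constant-free, quantitative manner, and the real trick is to apply it to the cleverly chosen pair $(Tx_j,-h_j)$ whose sum collapses onto $y\chi_{E_j}$, rather than attempting to estimate $\|Tx_j-y\|_Y$ head-on.
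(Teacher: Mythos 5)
Your proof is correct and follows essentially the same route as the paper's: assume a finite cover by balls of radius $\varrho<\lambda$, restrict the relevant center to the shrinking support $\spt Tx_j$, apply uniform separation to the pair whose sum is that restricted center to get a uniform lower bound on $\|y\chi_{\spt Tx_j}\|_Y$, and contradict this with the absolute continuity of the quasinorm. The only cosmetic differences are that you pass to a subsequence by pigeonhole where the paper instead dominates by $\sum_k|g_k|$ and keeps the whole sequence, and that you name the error term $(Tx_j-y)\chi_{\spt Tx_j}$ rather than the restricted center as the auxiliary function; neither affects the argument.
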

\begin{proof}
Set $\beta = \beta(T\colon X \to Y)$. Suppose that $\beta < \lambda$.
Let $\{x_j\}_{j = 1}^\infty \subseteq B_X$ be a sequence satisfying \eqref{thm:maximal_noncomp_ac_target:sequence_spt} and \eqref{thm:maximal_noncomp_ac_target:sequence_qnorm}. Fix any $r \in (\beta, \lambda)$.
Since $r > \beta$, it follows from the definition of the measure of noncompactness that there are $m\in\N$ and functions $\{g_k\}_{k = 1}^m \subseteq Y$ such that
\begin{equation}\label{thm:maximal_noncomp_ac_target:finite_cover}
T(B_X) \subseteq \bigcup_{k = 1}^m \big( g_k + r B_Y \big).
\end{equation}
Thanks to \eqref{thm:maximal_noncomp_ac_target:finite_cover}, for every $j\in\N$ there is $k_j\in\{1, \dots, m\}$ such that
\begin{equation}\label{thm:maximal_noncomp_ac_target:eq1}
\|Tx_j - g_{k_j}\|_Y \leq r.
\end{equation}
Set
\begin{equation*}
h_j = g_{k_j}\chi_{\spt Tx_j} \quad \text{for every $j\in\N$}.
\end{equation*}
Clearly
\begin{equation*}
|h_j - Tx_j| \leq |g_{k_j} - Tx_j| \quad \text{$\mu$-a.e.~in $R$ for every $j\in\N$}.
\end{equation*}
Owing to this and the monotonicity of $\|\cdot\|_Y$, it follows from \eqref{thm:maximal_noncomp_ac_target:eq1} that
\begin{equation}\label{thm:maximal_noncomp_ac_target:eq2}
\|h_j - Tx_j\|_Y \leq r \quad \text{for every $j\in\N$}.
\end{equation}
Now, on the one hand, we can write
\begin{equation*}
    \|h_j\|_{Y}=\|Tx_j+h_j-Tx_j\|_{Y},
\end{equation*}
and so, applying~\eqref{thm:maximal_noncomp_ac_target:property_of_Y} to $f=Tx_j$ and $g=h_j-Tx_j$, we get, thanks to \eqref{thm:maximal_noncomp_ac_target:sequence_qnorm} and \eqref{thm:maximal_noncomp_ac_target:eq2}, that
\begin{equation}\label{thm:maximal_noncomp_ac_target:eq3}
\|h_j\|_Y > \varepsilon_{r,\lambda} \quad \text{for every $j\in\N$},
\end{equation}
in which $\varepsilon_{r,\lambda}>0$ is independent of $j$.
On the other hand, since
\begin{align*}
\spt h_j \subseteq \spt Tx_j \to \emptyset \quad &\text{as $j\to\infty$} \\
\intertext{and}
|h_j| \leq \sum_{k = 1}^m |g_k| \in Y \quad &\text{$\mu$-a.e.~in $R$ for every $j\in\N$},
\end{align*}
it follows from the absolute continuity of the $\|\cdot\|_Y$-quasinorm that (see \cite[Chapter~1, Proposition~3.6]{BS_book:88}) 
\begin{equation}\label{thm:maximal_noncomp_ac_target:eq4}
\lim_{j \to \infty} \|h_j\|_Y = 0.
\end{equation}
However, \eqref{thm:maximal_noncomp_ac_target:eq3} and \eqref{thm:maximal_noncomp_ac_target:eq4} contradict each other. It thus follows that $\beta\ge\lambda$, as desired.
\end{proof}

\begin{remarks}\ 
\begin{enumerate}[1.]
	\item In particular, the assumptions on $Y$ in the preceding theorem are satisfied when $Y$ is a Banach function space (in the sense of \cite{BS_book:88}) whose norm is absolutely continuous.
	\item Loosely speaking, a function from a quasi-K\"othe function space $Y$ has absolutely continuous quasinorm $\|\cdot\|_Y$ if and only if it can be used as a dominating function for which a suitable dominated convergence theorem for $\|\cdot\|_Y$ is valid. Such a characterization is well known, but it is usually stated with unnecessary assumptions. For example, it is proved in \cite[Chapter~1, Proposition~3.6]{BS_book:88}, which is referenced in the preceding proof, but $Y$ is assumed there to be a Banach function space. However, one can readily verify that the proof carries over verbatim for quasi-K\"othe function spaces.
    \item The assumption~\eqref{thm:maximal_noncomp_ac_target:sequence_spt} requires the supports of $\{Tx_j\}_{j = 1}^\infty$ to vanish pointwise $\mu$-a.e. Note that this is a more general assumption than requiring the measure of the supports to vanish. If the measure of the supports vanishes, then \eqref{thm:maximal_noncomp_ac_target:sequence_spt} is satisfied, but the opposite implication is valid only for finite measures in general.
    \item Even though we assume that $(R, \mu)$ is nonatomic throughout the paper, Theorem~\ref{thm:maximal_noncomp_ac_target} and its proof are valid for any $\sigma$-finite measure space.
    \item The assumption that $\|\cdot\|_Y$ is absolutely continuous is necessary, as the following simple example reveals. Consider $X=\ell_1$, $Y=\ell_{\infty}$, and let $T$ be the embedding operator $I\colon \ell_1 \to \ell_\infty$. Then all the other assumptions of Theorem~\ref{thm:maximal_noncomp_ac_target} are satisfied\textemdash in particular, the sequence $\{e^j\}_{j = 1}^\infty$ satisfies $\spt e^j\to\emptyset$ and $\|e^j\|_{Y}=\|T\|=1$, but $T$ is not maximally noncompact; in fact, we have $\beta(T)=\frac{1}{2}$ (cf.~\cite[Example, page~9408]{Lan:21}).
    \item The maximal noncompactness of the nonlimiting Sobolev embeddings \eqref{E:sobolev-lebesgue-nonlimiting} and \eqref{E:sobolev-lorentz-lorentz} can be easily obtained as a simple corollary of Theorem~\ref{thm:maximal_noncomp_ac_target}.
 \end{enumerate}
\end{remarks}

The following proposition can be used for verifying that the assumptions on $Y$ in Theorem~\ref{thm:maximal_noncomp_ac_target} for $Y = \Lambda^q_w(R,\mu)$ are satisfied.
\begin{proposition}\label{prop:Lambda_space_suff_condition_ac_theorem}
Let $q\in(0, \infty)$. Let $w\colon (0, \mu(R)) \to (0, \infty)$ be continuous, and assume that \eqref{prel:Lambda_space_nontrivial}, \eqref{prel:Lambda_space_deltaTwo}, and \eqref{prel:Lambda_space_quasiKothe} are satisfied. Furthermore, set
\begin{equation}\label{prop:Lambda_space_suff_condition_ac_theorem:dil_bdd}
\Theta(\lambda)  = \sup_{t\in(0, \lambda \mu(R))}\frac{w\big( \frac{t}{\lambda} \big)}{\lambda w(t)},\ \lambda\in(0, 1),
\end{equation}
and assume that $\Theta(\lambda)$ is finite for every $\lambda\in (0, 1)$ and  that
\begin{equation}\label{prop:Lambda_space_suff_condition_ac_theorem:control_func_small}
\inf_{\lambda\in(0,1)} \Theta(\lambda) \leq 1.
\end{equation}
Then the Lambda space $Y = \Lambda^q_w(R,\mu)$ satisfies the assumptions on $Y$ in Theorem~\ref{thm:maximal_noncomp_ac_target}\textemdash in other words, it is a quasi-K\"othe function space with absolutely continuous and uniformly separating quasinorm.
\end{proposition}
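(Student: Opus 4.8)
The plan is to dispatch the quasi-K\"othe and absolute-continuity requirements by quoting the facts already recorded in the preliminaries, and then to devote the real work to proving that the quasinorm of $\Lambda^q_w(R,\mu)$ is uniformly separating. Indeed, assumption \eqref{prel:Lambda_space_deltaTwo} guarantees that $\Lambda^q_w(R,\mu)$ is a quasi-Banach space, whence, as noted just before \eqref{prel:Lambda_space_quasiKothe}, it is automatically a quasi-Banach lattice with absolutely continuous quasinorm; the additional assumption \eqref{prel:Lambda_space_quasiKothe} upgrades it to a quasi-K\"othe function space. It therefore only remains to verify the uniform separation property \eqref{thm:maximal_noncomp_ac_target:property_of_Y}.

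For uniform separation I would fix $0 < r < R$ and take $f,g\in\Lambda^q_w(R,\mu)$ with $\|f\|_Y \geq R$ and $\|g\|_Y \leq r$; writing $h = f + g$, the goal is a lower bound on $\|h\|_Y$ independent of $f,g$. The starting point is the subadditivity \eqref{prel:star_of_sum_pointwise_est} applied to the decomposition $f = h + (-g)$, which, since $(-g)^* = g^*$, yields
\[
f^*(s+t) \leq h^*(s) + g^*(t) \quad \text{for all } s,t\in(0,\infty).
\]
Splitting $u = (1-\theta)u + \theta u$ with a parameter $\theta\in(0,1)$ to be chosen then gives the pointwise bound $f^*(u) \leq h^*((1-\theta)u) + g^*(\theta u)$.

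The heart of the matter is to raise this to the $q$-th power, integrate against $w$, and use the dilation control $\Theta$ to transfer the dilations off the weight. After the substitutions $v=(1-\theta)u$ and $v=\theta u$ together with the pointwise estimate $w(v/\lambda)\le\lambda\,\Theta(\lambda)\,w(v)$ furnished directly by \eqref{prop:Lambda_space_suff_condition_ac_theorem:dil_bdd}, the two resulting integrals are controlled by $\Theta(1-\theta)\|h\|_Y^q$ and $\Theta(\theta)\|g\|_Y^q$, respectively. Recombining the powers requires only an elementary inequality: for $q\in(0,1]$ one uses $(a+b)^q\le a^q+b^q$, while for $q>1$ one uses the sharper $(a+b)^q\le(1+\eta)^{q-1}a^q+(1+\eta^{-1})^{q-1}b^q$ with a free parameter $\eta>0$. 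In either case one reaches an estimate of the shape
\[
R^q \le \|f\|_Y^q \le A\,\|h\|_Y^q + B\,\|g\|_Y^q \le A\,\|h\|_Y^q + B\,r^q,
\]
where $A$ is a finite coefficient attached to $\|h\|_Y^q$ and $B$ the one attached to $\|g\|_Y^q$.

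The decisive point---and the main obstacle---is to choose the parameters so that $A<\infty$ while simultaneously $B < (R/r)^q$; once this holds, $\|h\|_Y \geq \big((R^q - B\,r^q)/A\big)^{1/q}=:\varepsilon_{r,R} > 0$, which is exactly \eqref{thm:maximal_noncomp_ac_target:property_of_Y}. This is where the standing hypotheses on $\Theta$ enter, and it is precisely here that a naive triangle-type estimate with a fixed constant fails (it cannot cope with $r$ close to $R$, which is the near-cancellation scenario that breaks uniform separation in general). Since $B$ equals $\Theta(\theta)$ up to the harmless factor $(1+\eta^{-1})^{q-1}$ (absent when $q\le1$), the infimum condition \eqref{prop:Lambda_space_suff_condition_ac_theorem:control_func_small} lets me pick $\theta\in(0,1)$ with $\Theta(\theta)$ as close to $\inf_{\lambda}\Theta(\lambda)\le1$ as I wish, and then---when $q>1$---choose $\eta$ large so that $(1+\eta^{-1})^{q-1}$ is close to $1$; because $(R/r)^q>1$, this forces $B<(R/r)^q$. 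Meanwhile $A$ is a finite multiple of $\Theta(1-\theta)$ (the multiple being $(1+\eta)^{q-1}$ when $q>1$ and $1$ otherwise), hence finite by the assumed finiteness of $\Theta(\lambda)$ for every $\lambda\in(0,1)$. Thus $\inf_{\lambda}\Theta(\lambda)\le1$ is exactly the ingredient that makes the $\|g\|_Y^q$-coefficient beatable against $(R/r)^q$, and the proof is complete.
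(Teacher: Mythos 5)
Your proposal is correct and follows essentially the same route as the paper's proof: decompose the large function as $(f+g)+(-g)$, apply \eqref{prel:star_of_sum_pointwise_est} with a splitting parameter, transfer the dilations onto the weight via $\Theta$, and invoke \eqref{prop:Lambda_space_suff_condition_ac_theorem:control_func_small} so that the coefficient of the small term is beaten by $(R/r)^q$ while the coefficient of $\|f+g\|_Y^q$ stays finite. The only (cosmetic) difference is how the $q$-th powers are recombined: you use pointwise convexity inequalities, treating $q\le 1$ and $q>1$ separately, whereas the paper applies the triangle inequality in $L^q$ directly; your variant is actually the more careful one for $q\in(0,1)$, where the $L^q$ quasinorm is not subadditive and the paper's displayed step needs exactly the $(a+b)^q\le a^q+b^q$ repair you supply.
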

\begin{proof}
Recall that, since the assumptions \eqref{prel:Lambda_space_nontrivial}, \eqref{prel:Lambda_space_deltaTwo}, and \eqref{prel:Lambda_space_quasiKothe} on $w$ are satisfied, $\Lambda^q_w(R,\mu)$ is a quasi-K\"othe function space with absolutely continuous quasinorm.

Let $f,g\in \Lambda^q_w(R,\mu)$ be such that
\begin{equation}\label{prop:Lambda_space_suff_condition_ac_theorem:f_g_norms}
\|g\|_{\Lambda^q_w(R,\mu)} \geq R > r \geq \|f\|_{\Lambda^q_w(R,\mu)}.
\end{equation}
We need to show that
\begin{equation}\label{prop:Lambda_space_suff_condition_ac_theorem:desired}
\|f + g\|_{\Lambda^q_w(R,\mu)} \geq \varepsilon_{r,R}
\end{equation}
for some $\varepsilon_{r,R} > 0$ independent of the functions $f,g$. Clearly, we may assume that $\|f\|_{\Lambda^q_w(R,\mu)} > 0$. Let $\lambda \in (0, 1)$. Using \eqref{prel:star_of_sum_pointwise_est}, we have
\begin{align}
\|g\|_{\Lambda^q_w(R,\mu)} &\leq \|(f + g)^*(\lambda t) w(t)^\frac1{q} \|_{L^q(0, \mu(R))} \nonumber\\
&\quad + \|f^*((1-\lambda)t) w(t)^\frac1{q}\|_{L^q(0, \mu(R))}. \label{prop:Lambda_space_suff_condition_ac_theorem:eq1}
\end{align}

Next, using \eqref{prop:Lambda_space_suff_condition_ac_theorem:dil_bdd}, we have
\begin{align*}
\|(f + g)^*(\lambda \cdot) w(t)^\frac1{q} \|_{L^q(0, \mu(R))}^q &= \int_0^{\mu(R)} (f + g)^*(\lambda t)^q w(t) \dd t \\
&= \int_0^{\lambda \mu(R)} (f + g)^*(t)^q w\Big( \frac{t}{\lambda} \Big) \lambda^{-1}\dd t \\
&\leq \Theta(\lambda) \|f + g\|_{\Lambda^q_w(R,\mu)}^q.
\end{align*}
Replacing $f + g$ with $f$ and $\lambda$ with $1-\lambda$, we also obtain
\begin{equation*}
\|f^*((1-\lambda)\cdot) w(t)^\frac1{q}\|_{L^q(0, \mu(R))} \leq \Theta(1 - \lambda)^{\frac1{q}} \|f\|_{\Lambda^q_w(R,\mu)}.
\end{equation*}
Combining these two estimates with \eqref{prop:Lambda_space_suff_condition_ac_theorem:eq1}, we arrive at
\begin{equation*}
\|g\|_{\Lambda^q_w(R,\mu)} \leq \Theta(\lambda)^{\frac1{q}} \|f + g \|_{\Lambda^q_w(R,\mu)} + \Theta(1 - \lambda)^{\frac1{q}} \|f\|_{\Lambda^q_w(R,\mu)}.
\end{equation*}
Hence
\begin{equation}\label{prop:Lambda_space_suff_condition_ac_theorem:eq2}
\|f + g \|_{\Lambda^q_w(R,\mu)} \geq \Theta(\lambda)^{-\frac1{q}} \big( \|g\|_{\Lambda^q_w(R,\mu)} - \Theta(1 - \lambda)^{\frac1{q}} \|f\|_{\Lambda^q_w(R,\mu)} \big)
\end{equation}
for every $\lambda\in(0,1)$.

Now, since $r < R$, we can find $\lambda_0\in (0,1)$ such that
\begin{equation*}
R - \Theta(1 - \lambda_0)^{\frac1{q}} r > 0
\end{equation*}
thanks to \eqref{prop:Lambda_space_suff_condition_ac_theorem:control_func_small}. Finally, combining this with \eqref{prop:Lambda_space_suff_condition_ac_theorem:eq2} and \eqref{prop:Lambda_space_suff_condition_ac_theorem:f_g_norms}, we obtain
\begin{equation*}
\|f + g \|_{\Lambda^q_w(R,\mu)} \geq \Theta(\lambda_0)^{-\frac1{q}} \big( R - \Theta(1 - \lambda_0)^{\frac1{q}} r \big) > 0.
\end{equation*}
Therefore, \eqref{prop:Lambda_space_suff_condition_ac_theorem:desired} is valid with $\varepsilon_{r,R} = \Theta(\lambda_0)^{-\frac1{q}} \big( R - \Theta(1 - \lambda_0)^{\frac1{q}} r \big)$.
\end{proof}

\begin{remark}\label{rem:ac_theorem_Lambda_spaces_examples}
Simple\textemdash yet important\textemdash examples of Lambda spaces $\Lambda^q_w(R,\mu)$ to which Proposition~\ref{prop:Lambda_space_suff_condition_ac_theorem} applies are Lorentz spaces $L^{p,q}(R,\mu)$ with either $p=1$ and $q\in(0, 1]$ or $p\in(1, \infty)$ and $q\in(0, \infty)$.

Another example, which will be important in the next section, is provided by Lorentz--Zygmund spaces $L^{p,q,\alpha}(R,\mu)$, where $\mu(R) < \infty$, $q\in(0, \infty)$, and one of the conditions \eqref{prel:LZ_quasi_Kothe_cond} is satisfied. Recall that the weight $w$ is defined by \eqref{prel:LZ_weight_def}. To this end, when $\alpha\geq 0$, note that
\begin{equation*}
\Theta(\lambda)  = \lambda^{-\frac{q}{p}} \lim_{t \to 0^+} \frac{\Big(1 + \log\big( \frac{\lambda \mu(R)}{t}\big) \Big)^{\alpha q}}{1 + \log\big( \frac{\mu(R)}{t} \big)^{\alpha q}} = \lambda^{-\frac{q}{p}}
\end{equation*}
for every $\lambda\in(0,1)$. On the other hand, when $\alpha < 0$, we have
\begin{equation*}
\Theta(\lambda)  = \lambda^{-\frac{q}{p}} \lim_{t \to \lambda\mu(R)^-} \frac{\Big(1 + \log\big( \frac{\lambda \mu(R)}{t}\big) \Big)^{\alpha q}}{\Big(1 + \log\big( \frac{\mu(R)}{t}\big) \Big)^{\alpha q}} = \lambda^{-\frac{q}{p}} \frac{\log(2)^{\alpha q}}{\log\big( \frac{2}{\lambda}\big)^{\alpha q}}
\end{equation*}
for every $\lambda\in(0,1)$, and $\lim_{\lambda \to 1^-} \Theta(\lambda) = 1$.
\end{remark}

An important feature of Theorem~\ref{thm:maximal_noncomp_ac_target} is that it does not require $\|\cdot\|_Y$ to be disjointly superadditive nor a norm. Consequently, we will be able to use it to prove the maximal noncompactness of the limiting Sobolev embeddings of type~\eqref{E:sobolev-lorentz-zygmund}.

We conclude this section with a characterization of when the Lambda spaces $\Lambda_w^q(R, \mu)$ are disjointly superadditive. As a corollary, we will obtain that the Lorentz--Zygmund spaces with $p = \infty$ (in particular, those of Brezis--Wainger type, appearing in~\eqref{E:sobolev-lorentz-zygmund}) are never disjointly superadditive. Note that the following characterization was already obtained in \cite[Theorem~7]{KM:04} (see also references therein), but its proof is omitted there. We prove it here for the reader's convenience.
\begin{proposition}
Let $q\in(0, \infty)$. The Lambda space $\Lambda_w^q(R, \mu)$ with a weight $w$ satisfying \eqref{prel:Lambda_space_nontrivial} and \eqref{prel:Lambda_space_deltaTwo} is $\gamma$-disjointly superadditive if and only if $\gamma\geq q$ and the function $t\mapsto W(t)t^{-\frac{q}{\gamma}}$ is equivalent to a nondecreasing function on $(0, \mu(R))$.
\end{proposition}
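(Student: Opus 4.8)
Throughout abbreviate $\|\cdot\| = \|\cdot\|_{\Lambda^q_w(R,\mu)}$, and recall from \eqref{prel:lambda_by_means_of_distributional_func} that $\|f\|^q = q\int_0^\infty W(f_*(\lambda))\lambda^{q-1}\dd\lambda$. The structural fact driving everything is that for $f_1,\dots,f_k$ with pairwise disjoint supports one has $|\sum_j f_j| = \sum_j|f_j|$, so the distributional functions add: writing $d_j = (f_j)_*$ and $D = \big(\sum_j f_j\big)_* = \sum_j d_j$, both $\sum_j\|f_j\|^\gamma$ and $\big\|\sum_j f_j\big\|^\gamma$ are determined by $\{d_j\}$. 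Since $\gamma$-disjoint superadditivity and both conditions in the statement are invariant under replacing $\|\cdot\|$ by an equivalent functional, I am free to argue up to multiplicative constants. The plan is to prove necessity and sufficiency separately, and within necessity to extract the monotonicity condition and the inequality $\gamma\ge q$ from two different test configurations.

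For necessity I would first test on characteristic functions: for disjoint $E_j$ with $\mu(E_j)=a_j$ one has $\|\chi_{E_j}\|^\gamma = W(a_j)^{\gamma/q}$ and $\big\|\sum_j\chi_{E_j}\big\|^\gamma = W\big(\sum_j a_j\big)^{\gamma/q}$, so superadditivity forces $\sum_j W(a_j)^{\gamma/q}\le C\,W\big(\sum_j a_j\big)^{\gamma/q}$ for every finite family. With $\phi = W^{\gamma/q}$ this is quasi-superadditivity of $\phi$, which for increasing $\phi$ with $\phi(0^+)=0$ is equivalent to $t\mapsto\phi(t)/t$ being almost increasing (the nontrivial implication by splitting a value into $n\approx t/s$ equal pieces). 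As $\phi(t)/t = \big(W(t)t^{-q/\gamma}\big)^{\gamma/q}$ and $x\mapsto x^{\gamma/q}$ preserves equivalence to a nondecreasing function, this gives exactly that $W(t)t^{-q/\gamma}$ is equivalent to a nondecreasing function. To obtain $\gamma\ge q$, now assuming the monotonicity condition, I would use a geometric \emph{equal-quasinorm staircase}: fix a large $\Lambda$, set $M_l=\Lambda^l m_0$ and $m_l=M_l-M_{l-1}$ (rescaling so that $M_k\le\mu(R)$), and take $f_l$ a multiple of $\chi_{E_l}$ on disjoint sets of measure $m_l$, with height $h_l$ chosen so that $\|f_l\|^q = h_l^q W(m_l) = 1$, i.e.\ $h_l = W(m_l)^{-1/q}$. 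Then $\sum_{l=1}^k\|f_l\|^\gamma = k$, while summation by parts gives $\big\|\sum_l f_l\big\|^q = \sum_l h_l^q\big(W(M_l)-W(M_{l-1})\big)$; for $\Lambda$ large, \eqref{prel:Lambda_space_deltaTwo} together with the monotonicity condition yields $W(M_l)-W(M_{l-1})\approx W(M_l)\approx W(m_l)$ uniformly in $l$, whence $\big\|\sum_l f_l\big\|^q\approx k$. Superadditivity would then demand $k\le C\,k^{\gamma/q}$ for all $k$, which fails unless $\gamma\ge q$.

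For sufficiency, assume $\gamma\ge q$ and $W(s)s^{-q/\gamma}\le\kappa\,W(t)t^{-q/\gamma}$ for $s\le t$. For disjoint $f_j$ (nonnegative, WLOG), applying this with $s=d_j(\lambda)\le D(\lambda)=t$ gives the pointwise bound $W(d_j(\lambda))\le\kappa\,W(D(\lambda))\big(d_j(\lambda)/D(\lambda)\big)^{q/\gamma}$. With the measure $\dd\nu = W(D(\lambda))D(\lambda)^{-q/\gamma}\lambda^{q-1}\dd\lambda$ and $G_j = d_j^{q/\gamma}$ this reads $\|f_j\|^q\le\kappa q\int G_j\dd\nu$. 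Raising to the power $\gamma/q$, summing, and invoking Minkowski's integral inequality for $p=\gamma/q\ge1$ in the form $\big(\sum_j(\int G_j\dd\nu)^p\big)^{1/p}\le\int\big(\sum_j G_j^p\big)^{1/p}\dd\nu$, I would use $\sum_j G_j^p = \sum_j d_j = D$ to recognise the right-hand side as $\int D^{q/\gamma}\dd\nu = \int W(D)\lambda^{q-1}\dd\lambda = \big\|\sum_j f_j\big\|^q/q$. This collapses to $\sum_j\|f_j\|^\gamma\le\kappa^{\gamma/q}\big\|\sum_j f_j\big\|^\gamma$, the desired superadditivity.

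The main obstacle is the necessity of $\gamma\ge q$: the naive staircase with equal widths only reproduces $\gamma\ge p$-type information (i.e.\ the monotonicity condition), and the sharp threshold $\gamma=q$ is invisible to identical disjoint copies, since those merely probe the dilation behaviour of $W$. The decisive idea is to choose the widths geometrically so that, via \eqref{prel:Lambda_space_deltaTwo} and the already-established monotonicity condition, the increments $W(M_l)-W(M_{l-1})$ stay comparable to $W(M_l)$ uniformly in $l$; equalising the quasinorms of the pieces then exposes the competition between the outer exponent $\gamma$ and the inner exponent $q$, which is precisely what forces $\gamma\ge q$. On the sufficiency side the one delicate point is conceptual rather than computational: the pointwise estimate uses only the monotonicity condition while Minkowski's inequality uses only $\gamma\ge q$, so the two hypotheses enter through genuinely different mechanisms and neither can be dropped.
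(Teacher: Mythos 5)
Your proof is correct, and while the extraction of the monotonicity condition from characteristic functions of $\lfloor t/s\rfloor$ equal pieces coincides with the paper's argument, the other two substantive steps take genuinely different routes. For the necessity of $\gamma\ge q$, the paper simply invokes \cite[Theorem~1]{KM:04}, which supplies a normalized disjointly supported sequence equivalent to the unit vector basis of $\ell_q$, so that disjoint superadditivity forces $\ell_q\subseteq\ell_\gamma$; your geometric staircase replaces that citation by an explicit construction, at the cost of having to establish the monotonicity condition first (which is logically harmless, since both are consequences of the same hypothesis) and of needing \eqref{prel:Lambda_space_deltaTwo} together with the almost-monotonicity of $W(t)t^{-q/\gamma}$ to make the increments $W(M_l)-W(M_{l-1})$ uniformly comparable to $W(m_l)$\textemdash this works exactly as you indicate once $\Lambda$ is chosen so large that $\kappa\Lambda^{-q/\gamma}\le\frac12$, modulo the small bookkeeping offset between $\sum_{i\le l}m_i$ and $M_l$. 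For sufficiency, the paper introduces the convex minorant $V(t)=\int_0^tF(s)^{\gamma/q}\dd s$, proves $V$ is equivalent to $W^{\gamma/q}$ (using \eqref{prel:Lambda_space_deltaTwo}), exploits the superadditivity of $V$ pointwise, and finishes with the reverse triangle inequality in $L^{q/\gamma}$ with exponent $q/\gamma\le 1$; you instead use the almost-monotonicity directly as the pointwise bound $W(d_j)\le\kappa W(D)(d_j/D)^{q/\gamma}$ and push the $\ell_{\gamma/q}$-sum through the integral by Minkowski's integral inequality with exponent $\gamma/q\ge1$. The two mechanisms are dual to one another; yours is arguably leaner, as it avoids the auxiliary function $V$ altogether and, notably, does not use \eqref{prel:Lambda_space_deltaTwo} in the sufficiency direction at all (that condition is of course still assumed so that $\Lambda^q_w(R,\mu)$ is a quasi-Banach space). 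Your closing observation that the monotonicity condition and $\gamma\ge q$ enter the sufficiency proof through disjoint mechanisms is accurate and is somewhat more transparent in your formulation than in the paper's.
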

\begin{proof}
Assume that $\Lambda_w^q(R, \mu)$ is $\gamma$-disjointly superadditive.

By \cite[Theorem~1]{KM:04}, there are constants $C_1, C_2>0$ and functions $\{f_j\}_{j = 1}^\infty$ such that their supports are mutually disjoint, $\|f_j\|_{\Lambda_w^q(R, \mu)} = 1$ and $C_1^q \sum_{j = 1}^\infty |\alpha_j|^q\leq \|\sum_{j = 1}^\infty \alpha_j f_j\|_{\Lambda_w^q(R, \mu)}^q \leq C_2^q \sum_{j = 1}^\infty |\alpha_j|^q$ for every $\{\alpha_j\}_{j = 1}^\infty\subseteq\ell_q$. This combined with the $\gamma$-disjoint superadditivity of $\Lambda_w^q(R, \mu)$ implies that
\begin{equation*}
\sum_{j = 1}^\infty |\alpha_j|^\gamma \leq C C_2^\gamma \left( \sum_{j = 1}^\infty |\alpha_j|^q \right)^\frac{\gamma}{q}
\end{equation*}
for every $\{\alpha_j\}_{j = 1}^\infty\subseteq\ell_q$. In other words, $\ell_q \subseteq \ell_\gamma$; hence $\gamma \geq q$.

Next, we show that the function $t\mapsto W(t)t^{-\frac{q}{\gamma}}$ is equivalent to a nondecreasing function\textemdash namely
\begin{equation*}
F(t) = \sup_{s\in (0, t]} W(s)s^{-\frac{q}{\gamma}},\ t\in(0, \mu(R)).
\end{equation*}
On the one hand, we clearly have $W(t)t^{-\frac{q}{\gamma}} \leq F(t)$ for every $t\in(0, \mu(R))$. On the other hand, we claim that
\begin{equation}\label{prop:lambda_disj_superadd_charac:eq1}
F(t) \leq (2C)^\frac{q}{\gamma} W(t)t^{-\frac{q}{\gamma}} \quad \text{for every $t\in(0, \mu(R))$}.
\end{equation}
To this end, fix $0 < s \leq t < \mu(R)$ and set $k = \lfloor \frac{t}{s} \rfloor$. Note that $k\leq \frac{t}{s} \leq 2k$. Since $(R,\mu)$ is nonatomic, there are disjoint sets $\{E_j\}_{j = 1}^k\subseteq R$ such that $\mu(E_j) = \frac{t}{k}$ for every $j = 1, \dots, k$. Combining this with the $\gamma$-disjoint superadditivity of $\Lambda_w^q(R, \mu)$, we obtain
\begin{align*}
W(t)^\frac{\gamma}{q} &= \|\chi_{(0,t)}w\|_{L^1(0, \mu(R))}^\frac{\gamma}{q} = \Big\| \sum_{j = 1}^k \chi_{E_j} \Big\|_{\Lambda_w^q(R, \mu)}^\gamma \geq \frac1{C} \sum_{j = 1}^k \| \chi_{E_j} \|_{\Lambda_w^q(R, \mu)}^\gamma \\
&= \frac{k}{C} W\Big( \frac{t}{k} \Big)^\frac{\gamma}{q} \geq \frac{t}{2C}\frac{W(s)^\frac{\gamma}{q}}{s},
\end{align*}
whence \eqref{prop:lambda_disj_superadd_charac:eq1} follows.

Now, assume $\gamma\geq q$ and that there is a nondecreasing function $F(t)$ such that $K_1 F(t) \leq W(t)t^{-\frac{q}{\gamma}} \leq K_2 F(t)$ for some $K_1,K_2>0$ and every $t\in(0, \mu(R))$. Set
\begin{equation*}
V(t) = \int_0^t F(s)^\frac{\gamma}{q} \dd s,\ t\in[0, \mu(R)).
\end{equation*}
Since the function $F^\frac{\gamma}{q}$ is nondecreasing, $V$ is convex. We clearly also have $V(0) = 0$. It follows that $V$ is superadditive. 

Next, we claim that
\begin{equation}\label{prop:lambda_disj_superadd_charac:eq2}
K_1^{\frac{\gamma}{q}} V(t) \leq W(t)^{\frac{\gamma}{q}} \leq 2K_3K_2^{\frac{\gamma}{q}} V(t) \quad \text{for every $t\in(0, \mu(R))$},
\end{equation}
where $K_3$ is the supremum in \eqref{prel:Lambda_space_deltaTwo}. To this end, on the one hand, we have
\begin{equation*}
V(t) \leq tF(t)^\frac{\gamma}{q} \leq K_1^{-\frac{\gamma}{q}} W(t)^{\frac{\gamma}{q}}
\end{equation*}
thanks to the monotonicity of $F^\frac{\gamma}{q}$. On the other hand, using \eqref{prel:Lambda_space_deltaTwo}, we obtain
\begin{equation*}
V(t) \geq \frac1{K_2^{\frac{\gamma}{q}}} \int_{\frac{t}{2}}^t \frac{W(s)^{\frac{\gamma}{q}}}{s} \dd s \geq \frac{W(\frac{t}{2})^{\frac{\gamma}{q}}}{2K_2^{\frac{\gamma}{q}}} \geq \frac{W(t)^{\frac{\gamma}{q}}}{2K_3K_2^{\frac{\gamma}{q}}}.
\end{equation*}

Now, let $k\in\N$ and $\{f_j\}_{j=1}^{k}\subseteq \Lambda_w^q(R, \mu)$ be functions having pairwise disjoint supports. Since their supports are disjoint, we have $(\sum_{j = 1}^k f_j)_* = \sum_{j = 1}^k (f_j)_*$. Combining this, \eqref{prel:lambda_by_means_of_distributional_func}, \eqref{prop:lambda_disj_superadd_charac:eq2}, and the superadditivity of $V$, we obtain
\begin{align*}
\Big\| \sum_{j  = 1}^k f_j \Big\|_{\Lambda_w^q(R, \mu)}^q &= q \int_0^{\mu(R)} W\Big( \sum_{j = 1}^k (f_j)_*(t) \Big) t^{q - 1} \dd t \\
&= q \int_0^{\mu(R)} \Bigg( W\Big( \sum_{j = 1}^k (f_j)_*(t) \Big)^\frac{\gamma}{q} \Bigg)^\frac{q}{\gamma} t^{q - 1} \dd t \\
&\geq qK_1 \int_0^{\mu(R)} V\Big( \sum_{j = 1}^k (f_j)_*(t) \Big)^\frac{q}{\gamma} t^{q - 1} \dd t \\
&\geq qK_1 \int_0^{\mu(R)} \Big( \sum_{j = 1}^k V((f_j)_*(t)) \Big)^\frac{q}{\gamma} t^{q - 1} \dd t \\
&= qK_1 \Big \|\sum_{j = 1}^k V((f_j)_*(t)) t^\frac{\gamma(q - 1)}{q} \Big\|_{L^\frac{q}{\gamma}(0, \mu(R))}^\frac{q}{\gamma},
\end{align*}
whence
\begin{equation}\label{prop:lambda_disj_superadd_charac:eq3}
\Big\| \sum_{j  = 1}^k f_j \Big\|_{\Lambda_w^q(R, \mu)}^\gamma \geq (qK_1)^\frac{\gamma}{q} \Big \|\sum_{j = 1}^k V((f_j)_*(t)) t^\frac{\gamma(q - 1)}{q} \Big\|_{L^\frac{q}{\gamma}(0, \mu(R))}.
\end{equation}
Since $\frac{q}{\gamma}\in(0, 1]$ and the functions $t\mapsto V((f_j)_*(t)) t^\frac{\gamma(q - 1)}{q}$, $j = 1, \dots, k$, are nonnegative, we can use the reverse triangle inequality for the $\|\cdot\|_{L^\frac{q}{\gamma}(0, \mu(R))}$ quasinorm to obtain
\begin{equation}\label{prop:lambda_disj_superadd_charac:eq4}
\Big \|\sum_{j = 1}^k V((f_j)_*(t)) t^\frac{\gamma(q - 1)}{q} \Big\|_{L^\frac{q}{\gamma}(0, \mu(R))} \geq \sum_{j = 1}^k \| V((f_j)_*(t)) t^\frac{\gamma(q - 1)}{q} \|_{L^\frac{q}{\gamma}(0, \mu(R))}.
\end{equation}

Finally, combining \eqref{prop:lambda_disj_superadd_charac:eq3} and \eqref{prop:lambda_disj_superadd_charac:eq4} together with using \eqref{prel:lambda_by_means_of_distributional_func} and \eqref{prop:lambda_disj_superadd_charac:eq2} again, we arrive at
\begin{align*}
\Big\| \sum_{j  = 1}^k f_j \Big\|_{\Lambda_w^q(R, \mu)}^\gamma &\geq (qK_1)^\frac{\gamma}{q} \sum_{j = 1}^k \| V((f_j)_*(t)) t^\frac{\gamma(q - 1)}{q} \|_{L^\frac{q}{\gamma}(0, \mu(R))} \\
&\geq  \frac{(K_1)^\frac{\gamma}{q}}{2 K_3 K_2^\frac{\gamma}{q}} q^\frac{\gamma}{q} \sum_{j = 1}^k \| W((f_j)_*(t))^\frac{\gamma}{q} t^\frac{\gamma(q - 1)}{q} \|_{L^\frac{q}{\gamma}(0, \mu(R))} \\
&=  \frac{(K_1)^\frac{\gamma}{q}}{2 K_3 K_2^\frac{\gamma}{q}}  \sum_{j = 1}^k \| f_j \|_{\Lambda_w^q(R, \mu)}^\gamma. \qedhere
\end{align*}
\end{proof}

\begin{corollary}
Let $\gamma > 0$ and $q\in(0, \infty)$. Let either $p\in(0, \infty)$ and $\alpha \in \R$ or $p = \infty$ and $\alpha + \frac1{q} < 0$. The Lorentz--Zygmund space $L^{p,q,\alpha}(R, \mu)$ is $\gamma$-disjointly superadditive if and only if
\begin{itemize}
    \item $0<p<q\leq \gamma$ or
    \item $0<q\leq p < \gamma$ or
    \item $0 < q \leq p = \gamma$ and $\alpha\leq 0$.
\end{itemize}
\end{corollary}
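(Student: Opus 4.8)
The plan is to apply the preceding proposition to the weight $w=w_{p,q,\alpha}^q$, exploiting the identity $L^{p,q,\alpha}(R,\mu)=\Lambda^q_{w_{p,q,\alpha}^q}(R,\mu)$ valid for $q\in(0,\infty)$. First I would check that, under either hypothesis on $(p,\alpha)$, this weight really does satisfy \eqref{prel:Lambda_space_nontrivial} and \eqref{prel:Lambda_space_deltaTwo}, so that the proposition is applicable. Writing
\[
W(t)=\int_0^t s^{\frac qp-1}\log\Big(\frac{2\mu(R)}{s}\Big)^{\alpha q}\dd s,
\]
the integrand is, for $p\in(0,\infty)$, integrable near $0$ since $\frac qp-1>-1$, whereas for $p=\infty$ the substitution $u=\log(2\mu(R)/s)$ turns the integral into $\int u^{\alpha q}\dd u$, which converges at $\infty$ precisely under the assumption $\alpha+\frac1q<0$. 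In either case $0<W(t)<\infty$ on $(0,\mu(R))$, and since $W$ is regularly varying at $0$ the doubling condition \eqref{prel:Lambda_space_deltaTwo} holds.

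The heart of the argument is a sharp two-sided estimate for $W$. I would establish that
\[
W(t)\approx t^{\frac qp}\log\Big(\frac{2\mu(R)}{t}\Big)^{\alpha q}\ (p<\infty),\qquad W(t)\approx \log\Big(\frac{2\mu(R)}{t}\Big)^{\alpha q+1}\ (p=\infty),
\]
for $t\in(0,\mu(R))$, with constants depending only on $p,q,\alpha$. The lower bound is immediate from monotonicity of the logarithmic factor when $\alpha q$ has the convenient sign, and otherwise from splitting the integral over $(0,t/2)$ and $(t/2,t)$; the matching upper bound follows from the same splitting together with the concentration of the mass of $s^{q/p-1}$ near $s=t$ (for $p<\infty$), or from the explicit primitive (for $p=\infty$). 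Consequently the function featuring in the proposition satisfies
\[
W(t)t^{-\frac q\gamma}\approx t^{\,q(\frac1p-\frac1\gamma)}\log\Big(\frac{2\mu(R)}{t}\Big)^{\alpha q}\ (p<\infty),\qquad W(t)t^{-\frac q\gamma}\approx t^{-\frac q\gamma}\log\Big(\frac{2\mu(R)}{t}\Big)^{\alpha q+1}\ (p=\infty),
\]
a power of $t$ times a power of a logarithm.

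Next I would translate ``equivalent to a nondecreasing function on $(0,\mu(R))$'' into a condition on the exponents. Because $\mu(R)<\infty$ and $W(t)t^{-q/\gamma}$ is continuous and positive, the only obstruction to such an equivalence sits at the left endpoint: a positive continuous $g$ on $(0,\mu(R))$ is equivalent to a nondecreasing function exactly when $\sup_{s\in(0,t]}g(s)\le C\,g(t)$, which for our power--logarithmic $g$ fails precisely when $g(t)\to\infty$ as $t\to0^+$. For $p<\infty$ the leading exponent is $\beta=q(\tfrac1p-\tfrac1\gamma)$: if $\beta>0$ (that is, $p<\gamma$) then $g\to0$ and $g$ is equivalent to a nondecreasing function for every $\alpha$; if $\beta<0$ (that is, $p>\gamma$) then $g\to\infty$ and no equivalence holds; in the borderline $\beta=0$ (that is, $p=\gamma$) one has $g\approx\log(2\mu(R)/t)^{\alpha q}$, which tends to $\infty$ iff $\alpha>0$, so the equivalence holds iff $\alpha\le0$. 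For $p=\infty$ the leading exponent $-q/\gamma$ is always negative, so $g\to\infty$ and the equivalence never holds, which recovers the announced fact that Brezis--Wainger type spaces are never disjointly superadditive.

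Finally I would combine these findings with the other half of the proposition, $\gamma\ge q$: the space is $\gamma$-disjointly superadditive iff $\gamma\ge q$ and either $p<\gamma$, or $p=\gamma$ with $\alpha\le0$. Splitting according to whether $p<q$ or $q\le p$ rewrites this disjunction as the three listed cases, namely $0<p<q\le\gamma$ (where $q\le\gamma$ forces $p<\gamma$ automatically), $0<q\le p<\gamma$, and $0<q\le p=\gamma$ with $\alpha\le0$. The only genuinely delicate step is the sharp two-sided estimate for $W$ and, within it, the borderline case $p=\gamma$, where the sign of $\alpha$ decides the outcome; the remainder is bookkeeping with the preceding proposition.
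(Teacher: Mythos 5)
Your proposal is correct and follows exactly the route the paper intends: the corollary is stated without proof as a direct consequence of the preceding proposition, and your computation of $W$, the reduction of ``equivalent to a nondecreasing function'' to the sign of the leading power (with the logarithm deciding the borderline case $p=\gamma$), and the final case-splitting supply precisely the omitted details. Nothing to correct.
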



\section{Maximal noncompactness of limiting Sobolev embeddings}
In this section, we will finally prove that the limiting Sobolev embeddings~\eqref{E:sobolev-lorentz-zygmund} and~\eqref{E:sobolev-lorentz-zygmund_weak} are maximally noncompact. 
\begin{theorem}
    Let $\Omega\subseteq \rn$ be a domain with $|\Omega| < \infty$. Assume that $n\in\N$, $n\ge2$, and $q\in[n,\infty]$. Set $\alpha = -1 + \frac1{n} - \frac1{q}$.  Then the Sobolev embedding
    \begin{equation}\label{thm:maximal-noncompact-Sobolev-into-BW:em}
        V_0^{1, n}(\Omega)\hookrightarrow L^{\infty,q, \alpha}(\Omega)
    \end{equation}
    is maximally noncompact.
\end{theorem}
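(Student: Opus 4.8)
The plan is to derive the statement from Theorem~\ref{thm:maximal_noncomp_ac_target}, applied with $X=V_0^{1,n}(\Omega)$, with $T$ the (linear, bounded, hence positively homogeneous) embedding, and with $Y=L^{\infty,q,\alpha}(\Omega)$. It is convenient to split according to whether $q<\infty$ or $q=\infty$. When $q=\infty$ the target $L^{\infty,\infty,\frac1n-1}(\Omega)$ of~\eqref{E:sobolev-lorentz-zygmund_weak} is a Marcinkiewicz space whose defining logarithm varies too slowly for Proposition~\ref{prop:Lambda_space_suff_condition_ac_theorem}; there I would instead invoke the technique developed in~\cite{MMMP:preprint} for Marcinkiewicz targets. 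The substance of the argument lies in the range $q\in[n,\infty)$, on which I concentrate.

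First I would check that $Y=L^{\infty,q,\alpha}(\Omega)$ satisfies the hypotheses imposed on the target in Theorem~\ref{thm:maximal_noncomp_ac_target}. With $\alpha=-1+\frac1n-\frac1q$ one has $\alpha+\frac1q=-1+\frac1n<0$ (as $n\ge2$) and $\alpha<0$; hence the fourth case of~\eqref{prel:LZ_quasi_Kothe_cond} guarantees that $Y$ is a quasi-K\"othe function space, while the $\alpha<0$ computation of $\Theta(\lambda)$ in Remark~\ref{rem:ac_theorem_Lambda_spaces_examples} shows that Proposition~\ref{prop:Lambda_space_suff_condition_ac_theorem} applies to $Y$ (with $p=\infty$). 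Consequently $\|\cdot\|_Y$ is absolutely continuous and uniformly separating, and Theorem~\ref{thm:maximal_noncomp_ac_target} reduces the whole problem to exhibiting, for each $\lambda<\|T\|$, a sequence $\{u_j\}\subseteq B_X$ with $\spt u_j\to\emptyset$ and $\|u_j\|_Y\ge\lambda$.

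To build such sequences and to locate $\|T\|$, I would pass to radially decreasing functions: by the P\'olya--Szeg\H{o} principle symmetric rearrangement does not increase $\|\nabla\cdot\|_{L^n}$ and, $Y$ being rearrangement-invariant, leaves $\|\cdot\|_Y$ unchanged, so $\|T\|$ is the supremum over radial decreasing $u$. Writing $f=u^*$, one has the identity $\|\nabla u\|_{L^n}^n=n^n\omega_n\int_0^{|\Omega|}|f'(t)|^n t^{n-1}\dd t$ (with $\omega_n$ the volume of the unit ball in $\rn$) together with $u^*(t)=\int_t^{|\Omega|}\bigl(-f'(s)\bigr)\dd s$, so that $\|T\|$ becomes the sharp constant of a weighted Hardy inequality from $L^n(t^{n-1}\dd t)$ into $Y$; I would compute this constant $C_0$ and show it is approached by Moser functions. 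Explicitly, on concentric balls $B(x_0,\rho_j)$ with $\rho_j\to0$ I take $u_j(x)=\min\{L_j,\max\{\log(\rho_j/|x-x_0|),0\}\}$, normalized in $V_0^{1,n}$. A direct evaluation of $u_j^*$ (which is $\tfrac1n\log(\omega_n\rho_j^n/t)$ truncated at height $L_j$) shows $\|u_j\|_Y\to C_0=\|T\|$, \emph{provided} the scales are balanced so that $L_j\to\infty$ far faster than $\log(1/\rho_j)$. This balancing is essential: because the weight~\eqref{prel:LZ_weight_def} senses the global scale through $\log(2|\Omega|/t)$, a mere dilation forces $\|u_j\|_Y\to0$, and only by letting the concentration height $L_j$ outrun the shrinking radius does one keep $\|u_j\|_Y$ at its supremal value while $\spt u_j=B(x_0,\rho_j)\to\emptyset$.

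I expect the principal difficulty to be precisely this joint control---identifying $\|T\|$ with the sharp weighted Hardy constant $C_0$ and proving that the two-parameter Moser family attains it in the limit while its supports collapse. (The pointwise Hardy bound alone is too lossy, since the corresponding integral diverges logarithmically; the sharp constant requires the Hardy inequality in its integrated form.) Once this is established, for a fixed $\lambda<\|T\|$ the convergence $\|u_j\|_Y\to\|T\|$ lets me discard finitely many terms so that $\|u_j\|_Y\ge\lambda$ for all $j$, whereupon Theorem~\ref{thm:maximal_noncomp_ac_target} yields $\beta(T)\ge\lambda$; letting $\lambda\uparrow\|T\|$ and recalling $\beta(T)\le\|T\|$ gives $\beta(T)=\|T\|$, i.e.\ maximal noncompactness. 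The remaining case $q=\infty$ would be handled along the same lines but through the Marcinkiewicz-space machinery of~\cite{MMMP:preprint}.
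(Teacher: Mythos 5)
Your reduction of the problem to Theorem~\ref{thm:maximal_noncomp_ac_target} (for $q<\infty$) and to the Marcinkiewicz machinery of \cite{MMMP:preprint} (for $q=\infty$), together with the verification that $L^{\infty,q,\alpha}(\Omega)$ with $\alpha+\frac1q=-1+\frac1n<0$ is a quasi-K\"othe space with absolutely continuous, uniformly separating quasinorm via the fourth case of \eqref{prel:LZ_quasi_Kothe_cond}, Proposition~\ref{prop:Lambda_space_suff_condition_ac_theorem} and Remark~\ref{rem:ac_theorem_Lambda_spaces_examples}, is exactly what the paper does. The gap is in the only remaining, and essential, step: producing for each $\lambda<\|T\|$ a sequence in $B_{V_0^{1,n}(\Omega)}$ with shrinking supports and $Y$-quasinorm at least $\lambda$. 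You propose to do this by (i) identifying $\|T\|$ with the sharp constant $C_0$ of a weighted Hardy inequality and (ii) proving that a two-parameter Moser family with $L_j\gg\log(1/\rho_j)$ attains $C_0$ in the limit. Neither (i) nor (ii) is carried out, and (ii) is doubtful as stated: the truncated-logarithm profile is extremal for the endpoint pointwise estimate $u^*(t)\lesssim\|\nabla u\|_{L^n}\log(|\Omega|/t)^{1-\frac1n}$, but there is no reason it should be asymptotically extremal for the integrated quasinorm $\bigl(\int_0^{|\Omega|}u^*(t)^q\log(2|\Omega|/t)^{\alpha q}\,\frac{\d t}{t}\bigr)^{1/q}$ for general $q\in[n,\infty)$; a direct computation with the Moser family produces \emph{some} finite limit, but equating it with $\|T\|$ would require solving a sharp-constant problem for the Hansson--Brezis--Wainger scale that is hard in its own right and is nowhere needed for maximal noncompactness.

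The paper avoids this entirely. Since $\lambda<\|T\|$, there is \emph{some} $u\in B_{V_0^{1,n}(\Omega)}$ with $\|u\|_Y>\lambda$; after symmetrization (P\'olya--Szeg\H o plus rearrangement invariance of $Y$) and a cutoff, one gets a radial, compactly supported $v\in B_{V_0^{1,n}(B_R)}$ with $\|v\|_Y>\lambda$. The key device is then the Ioku-type nonlinear dilation $v_\kappa(x)=\kappa^{-1+\frac1n}\,v\bigl(|x|^{\kappa-1}x/(2^{\frac1n}R)^{\kappa-1}\bigr)$, which is an \emph{exact} invariance of both $\|\nabla\cdot\|_{L^n}$ and $\|\cdot\|_{L^{\infty,q,\alpha}}$ (the specific value $\alpha=-1+\frac1n-\frac1q$ is what makes the second invariance work), while $\spt v_\kappa\to\emptyset$ as $\kappa\to0^+$. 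This turns an arbitrary near-extremal function into a concentrating family at no loss of quasinorm, with no need to know $\|T\|$, the extremal profile, or any sharp Hardy constant. If you want to salvage your route, you would have to either prove the sharp-constant identification for the Moser family or, more realistically, replace it by a norm-preserving concentration mechanism of this kind.
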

\begin{proof}
We may assume without loss of generality that $0\in\Omega$. 

Fix any $\lambda \in (0, \|I\|)$, where $\|I\|$ is the norm of the embedding \eqref{thm:maximal-noncompact-Sobolev-into-BW:em}. Since $\lambda < \|I\|$, we can find a function $u\in B_{V_0^{1, n}(\Omega)}$ such that $\|u\|_{L^{\infty,q, \alpha}(\Omega)} > \lambda$. Let $B_R\subseteq \rn$ be the ball centered at the origin with radius $R$ such that $|B_R| = |\Omega|$. It follows from the P\'olya-Szeg\"o inequality (e.g., \cite[Chapter~15]{L_book:17}, see also~\cite[Lemma~4.1]{CP:98}) that $u^\bigstar\in B_{V_0^{1,n}(B_R)}$, where $u^\bigstar$ is the spherically symmetric rearrangement of $u$. Recall that $u^\bigstar$ is defined as
\begin{equation*}
u^\bigstar(x) = u^*(\omega_n |x|^n),\ x\in\rn.
\end{equation*}
Moreover, since the functions $u$ and $u^\bigstar$ are equimeasurable (i.e., their distributional functions are the same) and $|B_R| = |\Omega|$, we have
\begin{equation*}
\|u^\bigstar\|_{L^{\infty,q, \alpha}(B_R)} = \|u\|_{L^{\infty,q, \alpha}(\Omega)} > \lambda.
\end{equation*}
Furthermore, by using a suitable sequence of cutoff functions, there is a sequence of radially symmetric functions $\{v_j\}_{j = 1}^\infty\subseteq B_{V_0^{1,n}(B_R)}$ such that $\|v_j\|_{L^{\infty,q, \alpha}(B_R)}\nearrow \|u^\bigstar\|_{L^{\infty,q, \alpha}(B_R)}$ and $\spt v_j \nearrow \spt u^\bigstar$ as $j\to\infty$. Hence, there is a function
\begin{equation}\label{thm:maximal_noncomp_ac_target:eq6}
v\in B_{V_0^{1,n}(B_R)}
\end{equation}
such that $\spt v \subseteq B_{\tilde{R}}$ for some $\tilde{R} \in (0, R)$, $v = v^\bigstar$, and
\begin{equation}\label{thm:maximal_noncomp_ac_target:eq10}
\|v\|_{L^{\infty,q, \alpha}(B_R)} > \lambda.
\end{equation}

Now, for every $\kappa\in(0,1)$, we define the function $v_\kappa \in V_0^{1,n}(B_R)$ (cf.~\cite{I:19}) as
\begin{align*}
v_\kappa(x) &= \kappa^{-1 + \frac1{n}} v\left( \frac{|x|^{\kappa - 1}}{(2^\frac1{n} R)^{\kappa - 1}} x \right) \\
	&=  \kappa^{-1 + \frac1{n}} v^* \left( \omega_n \left( \frac{|x|^{\kappa}}{(2^\frac1{n} R)^{\kappa - 1}} \right)^n \right),\ x\in \rn.
\end{align*}
Note that $v_\kappa$ is supported inside $B_{R_\kappa}$, where
\begin{equation*}
R_\kappa = 2^\frac{\kappa - 1}{n\kappa} \left( \frac{\tilde{R}}{R} \right)^\frac1{\kappa} R.
\end{equation*}
Since $R_\kappa \to 0$ as $\kappa \to 0^+$, we have
\begin{equation}\label{thm:maximal_noncomp_ac_target:eq12}
\spt v_\kappa \to \emptyset \quad \text{as $\kappa \to 0^+$}.
\end{equation}

Let $\kappa_0\in(0,1)$ be such that $\spt v_\kappa \subsetneq \Omega \cap B_R$ for every $\kappa\in(0, \kappa_0)$. Fix arbitrary $\kappa\in(0, \kappa_0)$. Since $\spt v_\kappa \subsetneq \Omega$, we have $v_\kappa \in V_0^{1,n}(\Omega)$. Before we can conclude the proof, we need to observe three things.

First, we have
\begin{equation}\label{thm:maximal_noncomp_ac_target:eq5}
\| \nabla v_\kappa\|_{L^n(B_R)} = \| \nabla v\|_{L^n(B_R)}.
\end{equation}
To this end, it can be easily verified that
\begin{equation*}
|\nabla v (x)| = \phi(|x|) \quad \text{for a.e.~$x\in\rn$},
\end{equation*}
where
\begin{equation*}
\phi(t) = -(v^*)'(\omega_n t^n) n\omega_n t^{n - 1},
\end{equation*}
and that
\begin{equation*}
|\nabla v_\kappa (x)| = \kappa^\frac1{n} \phi\Bigg( \frac{|x|^\kappa}{(2^\frac1{n} R)^{\kappa - 1}} \Bigg) \frac{|x|^{\kappa - 1}}{(2^\frac1{n} R)^{\kappa - 1}} \quad \text{for a.e.~$x\in\rn$}.
\end{equation*}
Hence
\begin{align*}
	\| \nabla v_\kappa \|_{L^n(B_R)}^n &= n \omega_n \kappa \int_0^R \phi\Bigg( \frac{r^\kappa}{(2^\frac1{n} R)^{\kappa - 1}} \Bigg)^n \frac{r^{n(\kappa - 1)}}{(2^\frac1{n} R)^{n(\kappa - 1)}} r^{n-1}\dd r \\
	&= n \omega_n \kappa \int_0^R \left( \phi\Bigg( \frac{r^\kappa}{(2^\frac1{n} R)^{\kappa - 1}} \Bigg) \frac{r^\kappa}{(2^\frac1{n} R)^{\kappa - 1}} \right)^n \ddov{r} \\
		&= n \omega_n \kappa \int_0^{2^{\frac{1 - \kappa}{n}}R} \phi(s)^n s^n \kappa^{-1} s^{-\frac1{\kappa}} s^{\frac1{\kappa} - 1} \dd s \\
		&= \omega_n \kappa \int_0^{R} \phi(s)^n s^{n-1} \dd s \\
		&= \| \nabla v \|_{L^n(B_R)}^n.
\end{align*}
Here we used the fact that $2^{\frac{1 - \kappa}{n}} > 1$ in the next to last equality. Therefore, since $\spt v_\kappa \subseteq \Omega$, we arrive at
\begin{equation}\label{thm:maximal_noncomp_ac_target:eq7}
v_\kappa \in B_{V_0^{1,n}(\Omega)}
\end{equation}
by combining \eqref{thm:maximal_noncomp_ac_target:eq6} and \eqref{thm:maximal_noncomp_ac_target:eq5}.

Second, note that $v_\kappa(x) = g(\omega_n |x|^n)$, where $g\colon (0, \infty) \to [0, \infty)$ is defined as
\begin{equation}\label{thm:maximal_noncomp_ac_target:eq8}
g(t) = \kappa^{-1 + \frac1{n}} v^*\left( (2|B_R|)^{1 - \kappa} t^\kappa \right),\ t\in(0, \infty),
\end{equation}
is nonincreasing.

Third, we have
\begin{equation}\label{thm:maximal_noncomp_ac_target:eq9}
\|v_\kappa\|_{L^{\infty,q, \alpha}(B_R)} = \| v \|_{L^{\infty,q, \alpha}(B_R)}.
\end{equation}
Indeed, if $q\in[n, \infty)$, using \eqref{thm:maximal_noncomp_ac_target:eq8}, we obtain
\begin{align*}
\|v_\kappa\|_{L^{\infty,q, \alpha}(B_R)}^q &= \kappa^{q(-1 + \frac1{n})} \int_0^{|B_R|}  v^*\left( (2 |B_R|)^{1 - \kappa} t^\kappa \right)^q \log\left( \frac{2|B_R|}{t} \right)^{\alpha q} \ddov{t} \\
 &= \kappa^{q(-1 + \frac1{n}) - 1} \int_0^{2^{1 - \kappa}|B_R|}  v^*(s)^q \log\Bigg( \left( \frac{2|B_R|}{s} \right)^\frac1{\kappa} \Bigg)^{\alpha q} \ddov{s} \\
 &= \kappa^{q(-1 + \frac1{n}) - 1 - \alpha q} \int_0^{2^{1 - \kappa}|B_R|}  v^*\left( s \right)^q \log\left( \frac{2|B_R|}{s}  \right)^{\alpha q} \ddov{s} \\
 &= \int_0^{|B_R|}  v^*\left( s \right)^q \log\left( \frac{2|B_R|}{s}  \right)^{\alpha q} \ddov{s} \\
 &= \|v\|_{L^{\infty,q, \alpha}(B_R)}^q.
\end{align*}
Here we also used the fact that $2^{1-\kappa} > 1$ in the next to last equality. If $q = \infty$, we proceed similarly. We have
\begin{align*}
\|v_\kappa\|_{L^{\infty,\infty, \alpha}(B_R)} &= \kappa^{-1 + \frac1{n}}  \sup_{t\in(0, |B_R|)} v^*\left( (2 |B_R|)^{1 - \kappa} t^\kappa \right) \log\left( \frac{2|B_R|}{t} \right)^{-1 + \frac1{n}} \\
&= \kappa^{-1 + \frac1{n}}  \sup_{t\in(0, 2^{1 - \kappa}|B_R|)} v^*(t) \log\left( \left( \frac{2|B_R|}{t} \right)^\frac1{\kappa} \right)^{-1 + \frac1{n}} \\
&= \|v\|_{L^{\infty,\infty, \alpha}(B_R)}.
\end{align*}
Therefore, since $\spt v_\kappa \subseteq \Omega \cap B_R$ and $|B_R| = |\Omega|$, we obtain
\begin{equation}\label{thm:maximal_noncomp_ac_target:eq11}
\|v_\kappa\|_{L^{\infty,q, \alpha}(\Omega)} > \lambda
\end{equation}
by combining \eqref{thm:maximal_noncomp_ac_target:eq10} and \eqref{thm:maximal_noncomp_ac_target:eq9}.

Finally, in view of \eqref{thm:maximal_noncomp_ac_target:eq12}, \eqref{thm:maximal_noncomp_ac_target:eq7}, and \eqref{thm:maximal_noncomp_ac_target:eq11}, the maximal noncompactness of the embedding \eqref{thm:maximal-noncompact-Sobolev-into-BW:em} follows from either Theorem~\ref{thm:maximal_noncomp_ac_target} together with Remark~\ref{rem:ac_theorem_Lambda_spaces_examples} if $q\in[n, \infty)$ or \cite[Corollary~3.3]{MMMP:preprint} if $q = \infty$.
\end{proof}

\subsubsection*{Acknowledgment}
We would like to thank the referee for the careful reading of the paper and their valuable comments.

\end{document}